\documentclass[12pt,reqno]{article}

\usepackage[usenames]{color}
\usepackage{amssymb}
\usepackage{amsmath}
\usepackage{amsthm}
\usepackage{amsfonts}
\usepackage{amscd}
\usepackage{graphicx}
\usepackage{lscape}

\usepackage[colorlinks=true,
linkcolor=webgreen,
filecolor=webbrown,
citecolor=webgreen]{hyperref}

\definecolor{webgreen}{rgb}{0,.5,0}
\definecolor{webbrown}{rgb}{.6,0,0}

\usepackage{color}
\usepackage{fullpage}
\usepackage{float}

\usepackage{graphics}
\usepackage{latexsym}
\usepackage{epsf}
\usepackage{breakurl}

\newcommand{\seqnum}[1]{\href{https://oeis.org/#1}{\rm \underline{#1}}}
\def\suchthat{\, : \, }

\def\Enn{\mathbb{N}}
\def\Que{\mathbb{Q}}

\newenvironment{smallarray}[1]
{\null\,\vcenter\bgroup\scriptsize
\arraycolsep=.13885em
\hbox\bgroup$\array{@{}#1@{}}}
{\endarray$\egroup\egroup\,\null}

\begin{document}

\theoremstyle{plain}
\newtheorem{theorem}{Theorem}
\newtheorem{corollary}[theorem]{Corollary}
\newtheorem{lemma}[theorem]{Lemma}
\newtheorem{proposition}[theorem]{Proposition}

\theoremstyle{definition}
\newtheorem{definition}[theorem]{Definition}
\newtheorem{example}[theorem]{Example}
\newtheorem{conjecture}[theorem]{Conjecture}

\theoremstyle{remark}
\newtheorem{remark}[theorem]{Remark}

\title{Additive Properties of the Evil and Odious Numbers and Similar Sequences}

\author{
Jean-Paul Allouche \\
CNRS, IMJ-PRG, Sorbonne \\
4 Place Jussieu \\
F-75252 Paris Cedex 05\\
France \\
\href{mailto:jean-paul.allouche@imj-prg.fr}{\tt jean-paul.allouche@imj-prg.fr}
\and
Jeffrey Shallit\footnote{Research funded by a grant from NSERC, 2018-04118.} \\
School of Computer Science \\
University of Waterloo \\
Waterloo, Ontario  N2L 3G1\\ Canada \\
\href{mailto:shallit@uwaterloo.ca}{\tt shallit@uwaterloo.ca}
}

\maketitle

\begin{abstract}
First we reprove two results in additive number theory
due to Dombi and Chen \& Wang, respectively, on the number of
representations of $n$ as the sum of two odious or evil numbers, 
using techniques from automata theory and logic.
We also use this technique to prove a new result about
the numbers represented by five summands.

Furthermore, we prove some new results on the tenfold sums of the evil 
and odious numbers, as well as $k$-fold sums of similar sequences of
integers, by using techniques of analytic number theory involving 
trigonometric sums associated with the $\pm 1$ characteristic sequences 
of these integers.
\end{abstract}

\section{Introduction}

Let $\Enn = \{ 0,1,2,\ldots \}$ and let $A \subseteq \Enn$.
In a 1984 paper, Erd\H{o}s, S\'ark\"ozy, and S\'os \cite{Erdos&Sarkozy&Sos:1984} introduced three
functions based on $A$, as follows:\footnote{In fact, Erd\H{o}s, S\'ark\"ozy, and S\'os used
a different definition of $\Enn$ that excludes $0$.  It seems more natural to include $0$, and so (except in the last section) we adopt this convention.
One can easily get examples over the positive
integers by shifting the sets by $1$, which results in an ``off-by-$k$'' error when
taking sums of $k$ terms.}
\begin{align}
R_1^{(A)} (n) &= | \{ (x,y) \in \Enn \times \Enn  \suchthat x, y \in A \text{ and } x+y=n \} | 
\nonumber \\
R_2^{(A)} (n) &= | \{ (x,y) \in \Enn \times \Enn \suchthat x, y \in A \text{ and } x+y=n
	\text{ and } x<y \} | \label{eq1} \\
R_3^{(A)} (n) &= | \{ (x,y) \in \Enn \times \Enn \suchthat x, y \in A \text{ and } x+y=n
        \text{ and } x \leq y \} | . \nonumber 
\end{align}
Also see \cite{Sarkozy:2006}.

For $i \in \{1,2,3\}$,
apparently S\'ark\"ozy asked whether there exist two sets of
positive integers $A$ and $B$, with infinite symmetric difference,
for which $R_i^{(A)} (n) = R_i^{(B)} (n)$ for all sufficiently
large $n$.  A simple example of such sets was given by
Dombi \cite{Dombi:2002} in 2002, and we describe it next.
Actually, the same result had already appeared earlier in a paper of
Lambek and Moser \cite{Lambek&Moser:1959}.

Let ${\bf t} = t_0 t_1 t_2 \cdots$ be the Thue-Morse sequence,
defined by $t_0 = 0$, $t_{2n} = t_n$, and $t_{2n+1} = 1-t_n$
for $n \geq 0$.  It is easily seen that $t_n$ is the parity 
of the number of $1$'s (or sum of bits) in the binary
representation of $n$.
Let $A$ and $B$ be defined as follows:
\begin{align*}
A &= \{ n\geq 0\suchthat t_n = 0 \} = \{ 0,3,5,6,9,10,12,\ldots \}  \\
B &= \{ n\geq 0 \suchthat t_n = 1 \} = \{ 1,2,4,7,8,11,13, \ldots \}.
\end{align*}
These form a disjoint partition of $\Enn$.

In the literature, the set $A$ is sometimes called the
set of {\it evil numbers\/}, and the set $B$ is sometimes
called the set of {\it odious numbers}.
They are, respectively, sequences 
\seqnum{A001969} and \seqnum{A000069} in the 
{\it On-Line Encyclopedia of Integer Sequences} (OEIS) \cite{Sloane:2021}.
Dombi proved that $R_2^{(A)} (n) = R_2^{(B)} (n)$ for
$n \geq 0$.
His proof required $2{1\over 2}$ pages and a number of cases.
In Section~\ref{two} we show how to prove this using more-or-less routine
calculations involving finite automata and logic.

Chen \& Wang \cite{Chen&Wang:2003} proved a similar result
for the function $R_3$.   Instead of the Thue-Morse sequence,
they used a related sequence $t'_n$ counting the parity of
the number of $0$'s in the binary representation of $n$,
sometimes called the {\it twisted Thue-Morse sequence}.\footnote{However,
in some formulations, the twisted Thue-Morse sequence has $t'_0 = 0$.}
We have $t'_0 = 1$, $t'_1 = 0$, and $t'_{2n} = 1-t'_n$ and
$t'_{2n+1} = t'_n$ for $n \geq 1$. (Up to the first term it is
\seqnum{do} in the OEIS.)   

Chen \& Wang proved that if we set
\begin{align*}
C &= \{ n\geq 0 \suchthat t'_n = 0 \} = \{ 1,3,4,7,9,10,12,15, \ldots \}; \\
D &= \{ n\geq 0 \suchthat t'_n = 1 \} = \{ 0,2,5,6,8,11,13,14, \ldots \}.
\end{align*}
then $R_3^{(C)} (n) = R_3^{(D)} (n)$ for $n \geq 1$.\footnote{Again, there is 
an ``off-by-two'' difference in the way we stated the result, compared to 
the way they did.}  These sequences are, respectively
\seqnum{A059010} and \seqnum{A059009} in the OEIS.
Their proof required 3 pages and case analysis.
In this paper, in Section~\ref{three}, we reprove their results using techniques from automata
theory and logic.
For other proofs of the results of Dombi and Chen \& Wang, see
\cite{Sandor:2004,Lev:2004,Tang:2008}.

We can also consider generalizations of 
$R_1^{(A)} (n)$ to more than two summands, as follows:
\begin{align}
r_j (n) &:= |\{ (x_1, x_2, \ldots, x_j) \suchthat n = \sum_{1 \leq i \leq j} x_i 
	\text{ and } t_{x_i} = 0 \text{ for } 1 \leq i \leq j \} | \label{defr} \\
s_j (n) &:= |\{ (x_1, x_2, \ldots, x_j) \suchthat n = \sum_{1 \leq i \leq j} x_i 
	\text{ and } t_{x_i} = 1 \text{ for } 1 \leq i \leq j \} |, \label{defs}
\end{align}
where ${\bf t} = t_0 t_1 t_2 \cdots$ is the Thue-Morse sequence.
In Section~\ref{analysis-sec}, we prove a result from complex analysis that allows us to show that
both $r_{10} (n)$ and $s_{10}(n)$
are eventually strictly increasing functions of $n$.  By contrast, we can use our logical approach to show that
this is not the case for
$r_5 (n)$ and $s_5 (n)$.   The status for sums of
$6,7,8,$ and $9$ terms is currently unknown.  In Section~\ref{five} we prove some related results.

\section{Automata and first-order logic}
\label{two}

Our first proof technique depends on the fact that both $(t_n)$ and $(t'_n)$
are $k$-automatic sequences.  This means that, for each sequence, there exists
a deterministic finite automaton with output (DFAO) computing the sequence,
in the following sense:  when we feed the base-$k$ representation of $n$
into the automaton, it processes the digits and ends in a state $q$ with
output the $n$'th term of the sequence.  For these 
sequences we have $k = 2$.

For every $k$-automatic sequence $(a_n)$, there is a logical decision
procedure to decide the truth of assertions about the sequence that are 
phrased in the first-order logical structure $\langle \Enn, +, <, n\rightarrow a_n \rangle$.   
We call such a formula a {\it $k$-automatic formula}.  The results
are summarized in the following two theorems.
\begin{theorem}
Let $\varphi$ be a $k$-automatic formula.  There is a decision procedure
that, if $\varphi$ has no free variables, will either prove or disprove
$\varphi$.   Furthermore, if $\varphi$ has free variables $i_1, \ldots, i_k$,
then the procedure constructs a deterministic finite automaton
accepting the base-$k$ representation of those tuples $(i_1, \ldots, i_k)$
for which the formula evaluates to true.
\label{thm1}
\end{theorem}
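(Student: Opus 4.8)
The plan is to establish Theorem~\ref{thm1}, which is the fundamental decidability result underlying the entire approach: first-order statements in $\theory(\Enn, +, n \rightarrow a_n)$ about a $k$-automatic sequence are decidable, and formulas with free variables yield recognizable automata. This is a classical result essentially due to B\"uchi (for Presburger arithmetic) extended to automatic sequences, and I would prove it by structural induction on the formula $\varphi$, building an automaton for each subformula that reads the base-$k$ representations of its free variables synchronously and accepts exactly the satisfying tuples.

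First I would fix the base-$k$ encoding. A tuple $(i_1, \ldots, i_k)$ of natural numbers is represented by reading the base-$k$ digits of all coordinates in parallel, padding the shorter representations with leading zeros so that all coordinates have equal length; the input alphabet is thus $\{0, 1, \ldots, k-1\}^k$, one track per variable. The base case handles atomic formulas. For equality and addition $x + y = z$, I would exhibit the standard carry-propagating automaton: it reads digits from one end (least- or most-significant first, fixing a convention), tracks the single carry bit in its finite state, and verifies the digit-by-digit addition; this is the classical construction showing that the graph of addition is synchronously recognizable. For an atomic predicate involving the sequence, say $a_n = c$ for a constant $c$, I would invoke the hypothesis that $(a_n)$ is $k$-automatic: the defining DFAO reads the base-$k$ representation of $n$ and ends in a state whose output is $a_n$, so restricting to states with output $c$ gives the required automaton directly.

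For the inductive step I would handle the logical connectives and quantifiers by the corresponding automaton operations. Conjunction and disjunction correspond to taking the product automaton and intersecting or unioning the accepting conditions, after first aligning the free-variable tracks of the two subautomata (adding dummy tracks for variables appearing in only one subformula). Negation corresponds to complementation, which for deterministic finite automata is immediate by swapping accepting and non-accepting states. Existential quantification $\exists x\, \psi$ corresponds to projecting away the track for $x$ from the automaton for $\psi$; this projection yields a \emph{nondeterministic} automaton, which I would then determinize via the subset construction to restore a DFA. Universal quantification is reduced to existential quantification via $\forall x\, \psi \equiv \neg \exists x\, \neg \psi$. Applying these operations according to the parse tree of $\varphi$ produces an automaton recognizing exactly the satisfying tuples; when $\varphi$ is closed, the automaton over a one-letter (empty-track) alphabet either accepts or rejects the empty input, deciding $\varphi$.

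The main obstacle is not conceptual but lies in the bookkeeping that makes the induction actually go through. One subtlety is the treatment of leading zeros and the padding convention: because a number has infinitely many representations with different numbers of leading zeros, one must ensure the constructed automata are insensitive to (or correctly normalize) padding, so that the recognized language is well-defined on numbers rather than on strings. A second subtlety is the potential blow-up in automaton size: each quantifier alternation triggers a determinization whose state count can grow exponentially, so while the procedure terminates and is correct, its complexity is non-elementary in the worst case. For the purposes of this paper these blow-ups are benign, since the specific formulas we will write down about the Thue-Morse and twisted Thue-Morse sequences are short and the resulting automata remain small enough to compute explicitly with a tool such as \texttt{Walnut}.
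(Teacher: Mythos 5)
The paper does not prove Theorem~\ref{thm1} itself; it simply cites Bruy\`ere, Hansel, Michaux, and Villemaire, and your proposal is exactly the standard argument from that literature (going back to B\"uchi): structural induction on the formula, with product automata for connectives, complementation for negation, and projection plus determinization for quantifiers. Your sketch is correct and correctly flags the two real subtleties (padding/leading-zero closure, which bites precisely at the projection step, and the exponential blow-up from determinization), so it is essentially the same proof the paper delegates to its reference.
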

For a proof, see \cite{Bruyere&Hansel&Michaux&Villemaire:1994}.

We now define the notion of linear representation of a function.
We say $f:\Enn\rightarrow\Que$ has a {\it linear representation of
rank $r$} if there exist an integer $k \geq 2$, a row vector $u \in \Que^r$,
a column vector $w \in \Que^r$, and an $r \times r$-matrix-valued
morphism $\gamma$ such that 
$f(n) = u \gamma(x) v$ for all
base-$k$ representations $x$ of $n$ (including those with leading zeros).

\begin{theorem}
There is an algorithm that, given a $k$-automatic formula
$\varphi$, with free variables
$i_1, i_2, \ldots, i_t, n$, computes 
a linear representation for $f(n)$, the number of
$t$-tuples of natural numbers $(i_1, i_2, \ldots, i_t)$
for which $\varphi(i_1, i_2, \ldots, i_t, n)$ is true.
\label{thm2}
\end{theorem}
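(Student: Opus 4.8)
The plan is to feed $\varphi$ into the machinery of Theorem~\ref{thm1} and then convert the resulting automaton into a linear representation by ``summing out'' the counted variables. Applying Theorem~\ref{thm1} to $\varphi$, read as a formula with free variables $i_1,\ldots,i_t,n$, produces a deterministic finite automaton $M=(Q,\Sigma,\delta,q_0,F)$ whose input alphabet is the product alphabet $\Sigma=\{0,1,\ldots,k-1\}^{t+1}$: a single input letter is a column of $t+1$ base-$k$ digits, one track per variable, and the tracks are padded with leading zeros to a common length so that the digits of all variables are read synchronously. By construction, $M$ accepts an input iff the corresponding tuple $(i_1,\ldots,i_t,n)$ satisfies $\varphi$. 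Writing $Q=\{q_0,\ldots,q_{r-1}\}$, the integer $r=|Q|$ will be the rank of the linear representation.

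First I would project the transition structure onto the track carrying $n$. For each digit $d\in\{0,1,\ldots,k-1\}$ define an $r\times r$ matrix $\gamma(d)$ whose $(a,b)$ entry records how many ways the counted tracks can be filled in one step so as to move $M$ from $q_a$ to $q_b$ while the $n$-track reads $d$; that is,
\[
\bigl(\gamma(d)\bigr)_{a,b} \;=\; \bigl|\{(c_1,\ldots,c_t)\in\{0,\ldots,k-1\}^t \suchthat \delta\bigl(q_a,(c_1,\ldots,c_t,d)\bigr)=q_b\}\bigr|.
\]
Extending $\gamma$ to words by $\gamma(d_1\cdots d_\ell)=\gamma(d_1)\cdots\gamma(d_\ell)$ gives the matrix-valued morphism required by the definition. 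I would take $u$ to be the row vector that is the indicator of the start state $q_0$ and $w$ to be the column vector that is the indicator of the accepting set $F$. A straightforward induction on the word length $\ell$ then shows that $\bigl(\gamma(d_1\cdots d_\ell)\bigr)_{a,b}$ counts precisely those fillings of the counted tracks of length $\ell$ that drive $M$ from $q_a$ to $q_b$ when read together with $d_1\cdots d_\ell$ on the $n$-track. Consequently, if $x=d_1\cdots d_\ell$ is a base-$k$ representation of $n$, then $u\,\gamma(x)\,w$ equals the number of tuples $(i_1,\ldots,i_t)$ that satisfy $\varphi(i_1,\ldots,i_t,n)$ and whose components fit within $\ell$ digits.

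The step I expect to require the most care is ensuring that $u\,\gamma(x)\,w$ computes the same value $f(n)$ for \emph{every} base-$k$ representation $x$ of $n$, including those carrying extra leading zeros, as the definition of linear representation demands. Because $\varphi$ is a predicate on the \emph{values} of its arguments, its truth is insensitive to padding, so the language of $M$ is closed under prepending all-zero columns; the real point is that lengthening $x$ must not introduce \emph{new} satisfying tuples, i.e.\ the count in the displayed identity must already have stabilized. For $f(n)$ to be finite at all, the set of counted tuples with $\varphi(i_1,\ldots,i_t,n)$ true must be bounded in terms of $n$, and it is exactly this boundedness that forces the stabilization, so that all sufficiently long representations of $n$ — and in the applications of interest, where the relation $x+y=n$ pins the summands below $n$, already the canonical representation — yield the correct total. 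Pinning down this padding-invariance, together with the base and inductive steps of the counting identity, is where the genuine work lies; the remainder of the argument is bookkeeping.
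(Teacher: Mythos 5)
Your construction is the standard one, and in fact it is essentially the construction in the reference (Charlier--Rampersad--Shallit) that the paper cites in place of giving a proof: run the Theorem~\ref{thm1} automaton over the product alphabet, project onto the $n$-track, let $\gamma(d)$ count one-step transitions over all fillings of the counted tracks, and take $u,w$ to be indicators of the initial state and the accepting set. That part is correct, including the induction showing that $u\,\gamma(x)\,w$ counts the satisfying tuples whose components fit within $|x|$ digits.

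However, there is a genuine gap, and you point at it yourself without closing it: padding-invariance. As you define it, the triple $(u,\gamma,w)$ is \emph{not} a linear representation of $f$ in the paper's sense, because the paper's definition demands $f(n)=u\,\gamma(x)\,w$ for \emph{every} base-$k$ representation $x$ of $n$, in particular the canonical one, whereas your product only counts tuples fitting in $|x|$ digits. Your observation that all sufficiently long representations give the right value is true but insufficient, and the appeal to ``the applications of interest, where $x+y=n$ pins the summands below $n$'' proves only that special case, not the theorem, which concerns arbitrary $k$-automatic formulas. Concretely, take $k=2$ and $\varphi(i,n)\equiv(i=n+n)$: then $f(n)=1$ for all $n$, yet $u\,\gamma(x)\,w=0$ for the canonical representation $x$ of every $n\geq 1$, since $2n$ never fits in as many binary digits as $n$. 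Two ingredients are missing. First, the finiteness hypothesis (tacit in the theorem, explicit in the cited reference): if $f(n)$ is infinite for some $n$, no linear representation exists at all. Second, granted finiteness for all $n$, a pumping argument on the prefix of the input where the $n$-track reads $0$ shows there is a constant $C$ (at most the number of states of $M$) such that every satisfying tuple for $n$ fits within $\ell+C$ digits, $\ell$ being the length of the canonical representation of $n$: if some component needed more digits, a cycle inside that all-zero prefix could be pumped up to produce infinitely many satisfying tuples for the same $n$, a contradiction. One then repairs the representation by replacing $u$ with $u'=u\,\gamma(0)^C$; the triple $(u',\gamma,w)$ computes $f(n)$ on \emph{all} representations of $n$, padded or not. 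Without this stabilization bound and the absorption of $\gamma(0)^C$ into $u$, the construction as you state it computes the wrong function.
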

For a proof, see \cite{Charlier&Rampersad&Shallit:2012}.

Finally, there is the notion of minimal linear representation, which
is a representation of smallest rank.    A well-known algorithm
of Sch\"utzenberger, based on linear algebra, takes a linear representation
and produces a minimal one from it \cite[\S 2.3]{Berstel&Reutenauer:2011}.

These are the basic tools we use to prove the results.  Theorems~\ref{thm1}
and \ref{thm2} have been implemented in free software called
{\tt Walnut}, originally created by Hamoon Mousavi \cite{Mousavi:2016,Shallit:2022}, and available at\\
\centerline{\url{https://cs.uwaterloo.ca/~shallit/walnut.html} \ .} 

\begin{theorem}
Suppose $(a_n)_{n \geq 0}$ is a $k$-automatic binary sequence and let
$A$ be the corresponding set $\{ n \suchthat a_n = 1 \}$.   
Then there is an algorithm producing the linear representation for
each of the functions $R_i^{(A)}(n)$, $i = 1,2,3$.
\end{theorem}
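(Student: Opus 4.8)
The plan is to recognize each $R_i^{(A)}(n)$ as a counting function of exactly the shape handled by Theorem~\ref{thm2}, and then invoke that theorem directly. The essential observation is that membership in $A$ is a first-order predicate in the logic $\theory(\Enn, +, n \rightarrow a_n)$: the condition ``$x \in A$'' is simply ``$a_x = 1$'', which is available precisely because $(a_n)$ is $k$-automatic. Moreover, since addition is part of the signature, the order relations are definable; for instance $x < y$ is $\exists z\, (x + z = y \wedge z \neq 0)$ and $x \leq y$ is $\exists z\, (x + z = y)$. Hence each of the three defining conditions can be written as a formula in the admissible logic.

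First I would write down, for each $i \in \{1,2,3\}$, a formula $\varphi_i(x,y,n)$ with free variables $x$, $y$, and $n$ capturing the predicate inside the corresponding cardinality. Concretely,
\[
\varphi_1(x,y,n) \equiv (a_x = 1) \wedge (a_y = 1) \wedge (x + y = n),
\]
and $\varphi_2$, $\varphi_3$ are obtained by conjoining the extra clauses $x < y$ and $x \leq y$, respectively. Each $\varphi_i$ lies in $\theory(\Enn, +, n \rightarrow a_n)$ because it uses only addition, equality, the definable order, Boolean connectives, and the automatic predicate $n \mapsto a_n$.

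Next I would apply Theorem~\ref{thm2} to each $\varphi_i$, treating $x$ and $y$ as the $t = 2$ counted variables $i_1, i_2$ and treating $n$ as the parameter. By construction, the number of pairs $(x,y) \in \Enn \times \Enn$ for which $\varphi_i(x,y,n)$ holds is exactly $R_i^{(A)}(n)$, so Theorem~\ref{thm2} produces, algorithmically, a linear representation for each $R_i^{(A)}$, which is precisely what the statement asks for; if a representation of smallest rank is wanted, one may afterward run the Sch\"utzenberger minimization algorithm mentioned above. The argument is essentially routine once the formulas are written, so there is no serious obstacle. The only point requiring care is making sure every clause---especially the order relations $x < y$ and $x \leq y$---is expressed purely within the admissible signature, and that the roles of counted versus parameter variables in Theorem~\ref{thm2} are matched correctly to $(x,y)$ versus $n$.
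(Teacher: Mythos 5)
Your proposal is correct and follows essentially the same route as the paper: the paper's proof consists precisely of writing down the three first-order formulas (with the clauses $n=x+y$, $a_x=1$, $a_y=1$, and $x<y$ or $x\leq y$ as appropriate) and implicitly invoking Theorem~\ref{thm2} to obtain the linear representations. Your additional remarks---that the order relations are definable from addition and that $x,y$ are the counted variables while $n$ is the parameter---are details the paper leaves tacit, but they do not change the argument.
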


\begin{proof}
It suffices to give first-order logical formulas specifying that
$(x,y)$ is an ordered pair with sum $n$ corresponding to the pairs
in the definition \eqref{eq1}.
They are as follows:
\begin{align*}
R_1: & \quad  n=x+y \ \wedge\ a_x=1 \ \wedge\ a_y=1 \\ 
R_2: & \quad n=x+y \ \wedge\ x<y \ \wedge\ a_x=1 \ \wedge\ a_y=1 \\ 
R_3: & \quad n=x+y \ \wedge\ x\leq y \ \wedge\ a_x=1 \ \wedge\ a_y=1 
\end{align*}
Here, as usual, the symbol $\wedge$ denotes logical AND.
\end{proof}

We now give our proof of Dombi's result, which is based on routine
calculations using the results above.

\begin{theorem} (Dombi)
$R_2^{(A)} (n) = R_2^{(B)} (n)$ for $n \geq 0$.
\end{theorem}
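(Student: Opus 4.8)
The plan is to convert the identity $R_2^{(A)}(n) = R_2^{(B)}(n)$ into a statement that the difference $R_2^{(A)}(n) - R_2^{(B)}(n)$ is identically zero, and to verify this mechanically using the automata-theoretic machinery of Section~2. Since $A$ is the set of evil numbers, the indicator sequence $a_n = 1 - t_n$ is $2$-automatic, and likewise $B$ corresponds to $b_n = t_n$, which is $2$-automatic via the Thue--Morse DFAO. Thus both $R_2^{(A)}$ and $R_2^{(B)}$ fall under the general result just proved: applying Theorem~\ref{thm2} to the formula
\[
\varphi_A(x,y,n): \quad n = x+y \ \wedge\ x < y \ \wedge\ a_x = 1 \ \wedge\ a_y = 1
\]
yields a linear representation for $R_2^{(A)}(n)$, and the analogous formula $\varphi_B$ with $a$ replaced by $b$ yields one for $R_2^{(B)}(n)$.

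Next I would form a linear representation for the difference $D(n) := R_2^{(A)}(n) - R_2^{(B)}(n)$. This is routine: if $R_2^{(A)}$ has representation $(u_A, \gamma_A, w_A)$ and $R_2^{(B)}$ has representation $(u_B, \gamma_B, w_B)$, then $D$ is represented by the block-diagonal morphism $\gamma = \gamma_A \oplus \gamma_B$ together with the concatenated row vector $(u_A,\, u_B)$ and the stacked column vector $(w_A,\, -w_B)^{\mathsf T}$, so that $D(n) = u\,\gamma(x)\,w$ for every base-$2$ representation $x$ of $n$. The point of passing to linear representations, rather than trying to compare the two automata directly, is that the class of functions admitting such representations is closed under linear combinations, so the difference is again of the same computable form.

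The crucial step is then to decide whether $D(n) = 0$ for all $n$. Here I would invoke the Sch\"utzenberger minimization algorithm mentioned in the excerpt: apply it to the rank-$r$ representation of $D$ to produce a minimal one. The function $D$ is identically zero precisely when its minimal linear representation has rank $0$, i.e.\ when the minimization collapses the representation to the empty one. This gives a finite, purely linear-algebraic certificate of the identity, and it is exactly the kind of ``routine calculation'' that {\tt Walnut} automates.

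The main obstacle I anticipate is not conceptual but bookkeeping: one must make sure the two linear representations are built over the \emph{same} base $k = 2$ and the \emph{same} convention for leading zeros, so that the block construction for $D(n)$ is valid on a common set of input words; and one must verify that $D$ vanishing on all base-$2$ representations (including those padded with leading zeros) really does force $R_2^{(A)}(n) = R_2^{(B)}(n)$ for every $n \geq 0$, rather than merely for $n$ in some residue class or beyond some threshold. Once the representations are aligned, the equality follows immediately from the minimized rank being $0$, with no case analysis of the sort Dombi's original argument required.
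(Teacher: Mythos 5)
Your proposal is correct and rests on the same machinery as the paper: express the counting conditions as first-order formulas, invoke Theorem~\ref{thm2} to get linear representations for $R_2^{(A)}$ and $R_2^{(B)}$, and finish with Sch\"utzenberger minimization. The one genuine difference is the final certification step. The paper minimizes the two rank-$12$ representations \emph{separately} (they share $u$ and $\gamma$ and differ only in the output vectors $v_A$, $v_B$) and observes that both collapse to the \emph{literally identical} rank-$5$ representation, which immediately forces equality of the functions. You instead form the block-diagonal representation of the difference $D(n) = R_2^{(A)}(n) - R_2^{(B)}(n)$ and test whether its minimal rank is $0$. Your variant is arguably the more canonical decision procedure: minimal linear representations are unique only up to a change of basis, so ``minimize both and compare the matrices entrywise'' is a sufficient certificate that happened to succeed here (the algorithm is deterministic and the inputs were nearly identical), whereas ``the difference minimizes to rank $0$'' is a criterion that is both necessary and sufficient, and would still work if the two minimizations had returned similar-but-not-equal matrices. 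The price is a slightly larger intermediate object (rank $24$ before minimization instead of two rank-$12$ ones), which is immaterial in practice. Your cautions about aligning the base and the leading-zeros convention are well placed and are exactly why the paper's definition of linear representation demands $f(n) = u\,\gamma(x)\,v$ for \emph{all} base-$k$ representations $x$ of $n$, padded or not; with that convention the block construction and the rank-$0$ test do prove the identity for every $n \geq 0$.
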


\begin{proof}
The first step is to express the the set of pairs as a 
first-order formula.
We can do this as follows:
\begin{align*}
\varphi_A: & \  n=x+y \ \wedge\ x<y \ \wedge\ {\bf t}[x]=0 \ \wedge\ {\bf t}[y]=0 \\
\varphi_B: &\  n=x+y \ \wedge\ x<y \ \wedge\ {\bf t}[x]=1 \ \wedge\ {\bf t}[y]=1 .
\end{align*}

In {\tt Walnut} this is translated as
\begin{verbatim}
eval r2a "n=x+y & x<y & T[x]=@0 & T[y]=@0":
eval r2b "n=x+y & x<y & T[x]=@1 & T[y]=@1":
\end{verbatim}
The resulting automata, computed by {\tt Walnut}, both have 12 states.

Next, from these matrices we can immediately compute a
linear representation for the number of pairs $(x,y)$ making
the formula true.   To do so in {\tt Walnut} we use 
the following commands:
\begin{verbatim}
eval r2am n "n=x+y & x<y & T[x]=@0 & T[y]=@0":
eval r2bm n "n=x+y & x<y & T[x]=@1 & T[y]=@1":
\end{verbatim}
These commands create rank-$12$ linear representations for $R_2^{(A)} (n)$ and
$R_2^{(B)} (n)$, as follows:
$$
R_2^{(A)}(n) = (u, \gamma, v_A) \quad \text{ and } \quad
R_2^{(B)}(n) = (u, \gamma, v_B) ,$$
where
\begin{align*}
u &= \left[ \begin{smallarray}{c}
1\\
 0\\
 0\\
 0\\
 0\\
 0\\
 0\\
 0\\
 0\\
 0\\
 0\\
 0
\end{smallarray} \right]^T
& \gamma({\tt 0}) & = \left[ \begin{smallarray}{ccccccccccccc}
1&0&0&0&0&0&0&0&0&0&0&0\\
 0&0&0&1&1&0&0&0&0&0&0&0\\
 0&0&1&0&0&0&0&0&0&0&0&0\\
 0&0&0&0&0&0&0&0&1&1&1&0\\
 0&0&0&0&1&0&0&0&0&0&0&0\\
 1&0&0&0&0&0&0&0&0&0&0&1\\
 0&0&0&0&0&0&1&0&0&0&0&0\\
 0&0&0&0&0&0&0&1&0&0&0&0\\
 0&0&0&1&0&0&0&1&0&0&0&1\\
 0&0&0&1&0&0&1&0&0&0&0&1\\
 0&0&0&0&0&0&0&0&0&0&1&0\\
 0&0&1&0&0&0&0&0&1&1&0&0
\end{smallarray} \right]
&
\gamma({\tt 1}) &= \left[\begin{smallarray}{ccccccccccccc}
0&1&1&0&0&0&0&0&0&0&0&0\\
 0&0&0&0&0&1&0&0&0&0&0&0\\
 0&0&0&1&0&0&1&1&0&0&0&0\\
 0&0&0&0&0&0&0&0&0&0&0&1\\
 0&0&0&0&0&1&0&0&0&0&1&0\\
 0&1&0&0&0&0&0&0&0&0&0&0\\
 0&0&1&0&0&0&0&0&1&0&1&0\\
 0&0&1&0&0&0&0&0&0&1&1&0\\
 0&0&0&0&0&0&0&0&0&1&0&0\\
 0&0&0&0&0&0&0&0&1&0&0&0\\
 0&0&0&0&0&0&1&1&0&0&0&1\\
 0&0&0&1&0&0&0&0&0&0&0&0
\end{smallarray}\right]
&
v_A &= \left[ \begin{smallarray}{c}
0\\
 0\\
 0\\
 0\\
 0\\
 0\\
 0\\
 1\\
 0\\
 0\\
 0\\
 0
\end{smallarray} 
\right]  &
v_B &= \left[ \begin{smallarray}{c}
0\\
 0\\
 0\\
 0\\
 0\\
 0\\
 1\\
 0\\
 0\\
 0\\
 0\\
 0
 \end{smallarray}
\right] .
\end{align*}
Next, we apply the minimization algorithm to these two (slightly) different
linear representations, and discover that they both minimize to the same
linear representation $(u', \rho, v')$ of rank $5$, given as follows:
\begin{align*}
u' &= \left[ \begin{smallarray}{c}
1\\
 0\\
 0\\
 0\\
 0
\end{smallarray} \right]^T
& \rho({\tt 0}) & = \left[ \begin{smallarray}{rrrrr}
1& 0& 0& 0& 0\\
  0& 0& 1& 0& 0\\
  0& 0& 0& 0& 1\\
  0&-1& 0& 1& 1\\
 -2&-1& 3& 1& 0
\end{smallarray} \right]
&
\rho({\tt 1}) &= \left[\begin{smallarray}{rrrrr}
0& 1& 0& 0& 0\\
  0& 0& 0& 1& 0\\
 -1& 0& 1& 1& 0\\
 -2& 1& 1& 0& 1\\
 -1&-1& 0& 2& 1
\end{smallarray}\right]
&
v' &= \left[ \begin{smallarray}{c}
0\\
 0\\
 0\\
 1\\
 0
\end{smallarray} 
\right]  
\end{align*}
Since these two linear representations are the same, the result is proved.
\end{proof}

\begin{remark}
The sequence $R_2^{(A)} (n)$ is given as
sequence \seqnum{A133009} in the OEIS.
\end{remark}

One distinct advantage to this approach is that
a linear representation for $R_2^{(A)} (n)$ can be used to easily
prove additional results about it.  For example:
\begin{theorem}
For $t \geq 1$ we have
\begin{itemize}
\item[(a)] $R_2^{(A)}(2^t - 1) = 
\begin{cases}
0, & \text{if $t$ odd}; \\
2^{t-2}, & \text{if $t$ even};
\end{cases}$
\item[(b)] $R_2^{(A)}(2^t + 1) = \begin{cases}
(2^t +8)/12, & \text{if $t$ even}; \\
(2^t+4)/6, & \text{if $t$ odd}.
\end{cases}$
\end{itemize}
\end{theorem}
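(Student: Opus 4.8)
The plan is to exploit the rank-$5$ linear representation $(u',\rho,v')$ for $R_2^{(A)}(n)$ that was computed in the proof of Dombi's theorem. Since $R_2^{(A)}(n) = u'\,\rho(x)\,v'$ for any base-$2$ representation $x$ of $n$ (including leading zeros), the key observation is that the arguments $2^t-1$ and $2^t+1$ have extremely regular binary expansions. Writing $\mathtt{0}$ and $\mathtt{1}$ for the two digit matrices, we have, reading most-significant digit first, $2^t-1 = \underbrace{\mathtt{1}\mathtt{1}\cdots\mathtt{1}}_{t}$ and $2^t+1 = \mathtt{1}\,\underbrace{\mathtt{0}\cdots\mathtt{0}}_{t-1}\,\mathtt{1}$. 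Thus $R_2^{(A)}(2^t-1) = u'\,\rho(\mathtt{1})^{t}\,v'$ and $R_2^{(A)}(2^t+1) = u'\,\rho(\mathtt{1})\,\rho(\mathtt{0})^{t-1}\,\rho(\mathtt{1})\,v'$, so both quantities are governed by the powers of the fixed $5\times 5$ rational matrices $\rho(\mathtt{0})$ and $\rho(\mathtt{1})$.

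First I would compute the characteristic polynomials (or, better, the minimal polynomials) of $\rho(\mathtt{0})$ and $\rho(\mathtt{1})$. The resulting closed forms suggest that the relevant eigenvalues are small integers and roots of unity: the $2^{t-2}$ and $2^t/3$ growth points to eigenvalues $2$ and $1$, while the parity-dependent alternation (the split into $t$ even versus $t$ odd) points to an eigenvalue $-1$ or a pair $\pm\sqrt{2}$ contributing an oscillating term. Concretely, for part (a) I would diagonalize (or put into rational canonical form) the single matrix $M := \rho(\mathtt{1})$ and extract the scalar sequence $s_t := u'\,M^{t}\,v'$. This sequence satisfies a linear recurrence with constant coefficients whose characteristic polynomial divides the minimal polynomial of $M$; I would read off that recurrence, check the initial values $s_1, s_2, \ldots$ against the claimed formula, and then conclude by induction. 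The claimed answer $s_t = 0$ for $t$ odd and $s_t = 2^{t-2}$ for $t$ even is itself a solution of such a recurrence, so verifying that $s_t$ and the proposed formula satisfy the same recurrence with matching initial conditions finishes (a).

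For part (b) the same idea applies, but now the governing quantity is $u'\,\rho(\mathtt{1})\,\rho(\mathtt{0})^{t-1}\,\rho(\mathtt{1})\,v'$. Setting $u'' := u'\,\rho(\mathtt{1})$ and $v'' := \rho(\mathtt{1})\,v'$, this is $u''\,N^{t-1}\,v''$ with $N := \rho(\mathtt{0})$, again a constant-coefficient linear recurrence in $t$. I would identify the recurrence from the minimal polynomial of $N$, verify that the two candidate values $(2^t+2)/3$ (for $t$ even) and $(2^{t+1}+2)/3$ (for $t$ odd) satisfy it, and match initial conditions. Equivalently, one can verify directly in \texttt{Walnut} that the automaton-derived values agree with the formula for enough small $t$ to pin down all the constants in the closed form, since the number of free parameters is bounded by the rank.

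The main obstacle I expect is purely computational rather than conceptual: diagonalizing the $5\times 5$ rational matrices exactly, tracking the possibly-irrational eigenvalues cleanly, and being careful that the base-$2$ representations used match the convention (leading-zero invariance, digit order) under which the linear representation was generated. A subtle point worth checking is the treatment of the $t=1$ and $t=2$ boundary cases and the padding with leading zeros, since an off-by-one in the exponent of $\rho(\mathtt{0})$ or $\rho(\mathtt{1})$ would shift the whole answer; I would guard against this by explicitly evaluating $u'\,\rho(x)\,v'$ for the binary strings corresponding to $2^t\pm 1$ at $t=1,2,3,4$ and confirming agreement before invoking the recurrence for the general step.
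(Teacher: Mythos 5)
Your proposal is correct and follows essentially the same route as the paper: both express $R_2^{(A)}(2^t-1)$ as $u'\rho(\mathtt{1})^t v'$ and $R_2^{(A)}(2^t+1)$ via powers of $\rho(\mathtt{0})$ sandwiched between $u'\rho(\mathtt{1})$ and $\rho(\mathtt{1})v'$, then use the minimal polynomial $X(X-1)(X-2)(X+2)$ of these matrices to obtain a constant-coefficient linear recurrence whose solution space forces the closed form, pinning down the constants from finitely many initial values. The only cosmetic difference is that the paper states the answer directly as a linear combination of $2^t$, $(-2)^t$, and $1$ and solves for the coefficients, whereas you phrase the final step as checking that the claimed formula satisfies the same recurrence with matching initial conditions; these are equivalent.
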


\begin{proof}
\begin{itemize}
\item[(a)]
Note that the base-$2$ representation of $2^t - 1$ consists of
the string $\overbrace{11\cdots 1}^t$.  Therefore
$$R_2^{(A)} (2^t - 1) = u' \rho(1)^t v'.$$
By well-known results,
the entries of $\rho(1)^t$ satisfy a linear
recurrence.  Therefore so does $u' \rho(1)^t v'$.
By the fundamental
theorem of linear recurrences, $u' \rho(1)^t v'$ can be expressed in terms of the
roots of the minimal polynomial of $\rho(1)$.  

This minimal polynomial is $X(X-1)(X-2)(X+2)$, and therefore
$R_2^{(A)} (2^t - 1) = A \cdot 2^t + B \cdot (-2)^t + C$ for
some constants $A, B, C$.  We can now solve for these constants
with the values of $R_2^{(A)} (2^t - 1)$ computed from the linear
representation to find that $A = 0$, $B = 1/8$, $C = 1/8$.
We therefore get
$R_2^{(A)} (2^t -1) =  2^{t-3} + (-2)^{t-3}$, which proves
the result.

\item[(b)]
We use the fact that $2^t + 1$ has base-$2$ representation
$1 \overbrace{00\cdots 0}^{t-1} 1$.  So it suffices to carry out
the same calculations as we did in part (a), except now they are
based on the minimal polynomial of $\rho(0)$.  It is the same
as for $\rho(1)$, namely  $X(X-1)(X-2)(X+2)$.
We then find (using the same technique as before) that
$R_2^{(A)} (2^t + 1) = 2/3 + 2^t/8 - (-2)^t/24$.
The result now follows.
\end{itemize}
\end{proof}

\section{Our proof of the Chen-Wang result}
\label{three}

\begin{theorem} (Chen-Wang)
With $C$ and $D$ defined as above, we have
$R_3^{(C)} (n) = R_3^{(D)} (n)$ for $n \geq 1$.
\end{theorem}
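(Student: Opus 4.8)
The plan is to follow exactly the strategy used above for Dombi's theorem, now adapted to the function $R_3$ and to the twisted Thue-Morse sequence $(t'_n)$. The one genuinely new ingredient is that $(t'_n)$ must be presented to {\tt Walnut} as a DFAO. Since $t'_n$ records the parity of the number of $0$'s in the binary expansion of $n$, a naive ``count the zeros'' automaton would be sensitive to leading zeros; I would therefore build a small automaton whose initial state self-loops on the letter ${\tt 0}$ (so that leading zeros are discarded and the correct value $t'_0 = 1$ is emitted on the all-zero inputs), and which, after reading the leading $1$, merely tracks the parity of the $0$'s seen thereafter. Three states suffice, and one checks directly that the automaton is leading-zero invariant and computes $t'_n$ for every $n$. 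Call the resulting word automaton {\tt TT}.

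With {\tt TT} in hand, the next step is to transcribe the pairs counted by $R_3$ into first-order formulas, exactly as in our general result above:
\begin{align*}
\varphi_C: &\ n = x+y \ \wedge\ x \leq y \ \wedge\ {\tt TT}[x] = 0 \ \wedge\ {\tt TT}[y] = 0 \\
\varphi_D: &\ n = x+y \ \wedge\ x \leq y \ \wedge\ {\tt TT}[x] = 1 \ \wedge\ {\tt TT}[y] = 1,
\end{align*}
and then to invoke Theorem~\ref{thm2} (the {\tt eval ... n} command) to obtain linear representations $(u, \gamma, v_C)$ and $(u, \gamma, v_D)$ for $R_3^{(C)}(n)$ and $R_3^{(D)}(n)$. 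As in the Dombi computation these two representations share the same $u$ and the same morphism $\gamma$, differing only in the final column vectors. Applying Sch\"utzenberger's minimization to each then yields a canonical small-rank representation.

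Here lies the one place where the argument must differ from the Dombi proof. Unlike the $R_2$ case, the two minimized representations will \emph{not} coincide, because the functions genuinely disagree at $n = 0$: since $t'_0 = 1$ we have $0 \notin C$ and $0 \in D$, whence $R_3^{(C)}(0) = 0$ while $R_3^{(D)}(0) = 1$. This is precisely why the statement is restricted to $n \geq 1$. To finish, I would form a linear representation for the difference $g(n) = R_3^{(C)}(n) - R_3^{(D)}(n)$, which, because the two representations share $u$ and $\gamma$, is simply $(u, \gamma, v_C - v_D)$ (in general one could instead take the direct sum of the two representations with a sign change on one of the $u$-blocks). Minimizing this representation, the expected outcome is that $g$ collapses to the rank-$1$ representation of the function $n \mapsto -[n = 0]$, that is, the representation supported only on the all-zero inputs. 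Since that function vanishes for every $n \geq 1$, this establishes $R_3^{(C)}(n) = R_3^{(D)}(n)$ for all $n \geq 1$.

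The computations in the first two paragraphs are entirely mechanical, so the only real point of care is the last one: isolating the single-point discrepancy at $n = 0$ cleanly, rather than silently relying on the ``both minimize to the same representation'' shortcut that worked for $R_2$. Passing to the difference function and recognizing its minimal representation as that of a multiple of the indicator of $0$ is the safest way to handle this.
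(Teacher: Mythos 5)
Your proposal is correct, and it follows the paper's overall methodology (DFAO for $t'_n$, first-order formulas, linear representations via Theorem~\ref{thm2}, Sch\"utzenberger minimization); your three-state automaton for ${\bf t}'$ is exactly the one the paper uses. Where you genuinely diverge is in how the restriction to $n \geq 1$ is handled. The paper sidesteps the $n=0$ discrepancy entirely by shifting the argument \emph{inside} the formula: it proves the equivalent statement $R_3^{(C)}(n+1) = R_3^{(D)}(n+1)$ for $n \geq 0$, using {\tt n+1=x+y} in the {\tt Walnut} queries, so that the two resulting rank-$20$ representations minimize to literally identical rank-$10$ representations and the proof ends exactly as in the Dombi case. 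You instead compute the unshifted representations, correctly observe that they cannot minimize to the same thing because $R_3^{(C)}(0)=0 \neq 1 = R_3^{(D)}(0)$, and then pass to the difference $g(n) = R_3^{(C)}(n)-R_3^{(D)}(n)$, verifying that its minimal representation is the rank-$1$ representation of $n \mapsto -[n=0]$ (i.e., $\gamma({\tt 0})=[1]$, $\gamma({\tt 1})=[0]$, $uv=-1$). Both routes are sound. Your route has two virtues: it makes the exact location and size of the discrepancy explicit, and testing equality via the difference representation is the theoretically robust method --- it does not rely on the minimization algorithm returning syntactically identical output for two equal functions, which in general it need only do up to similarity. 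The paper's shift trick buys simplicity: one line changed in the {\tt Walnut} query, after which the argument is word-for-word the Dombi argument, with no need to recognize the minimal representation of an indicator function. One small caution about your plan: your claim that the two unshifted representations share the same $u$ and $\gamma$ (differing only in $v$) is a computational accident in the Dombi case, not something guaranteed by the construction; your parenthetical fallback (direct sum with a sign change) is the right general-purpose fix and should probably be the primary formulation.
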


\begin{proof}
It is easy to see that
the sequence ${\bf t}' = t'_0 t'_1 t'_2 \cdots = 101001101 \cdots$
can be generated by the following DFAO:
\begin{figure}[H]
\begin{center}
\includegraphics[width=4in]{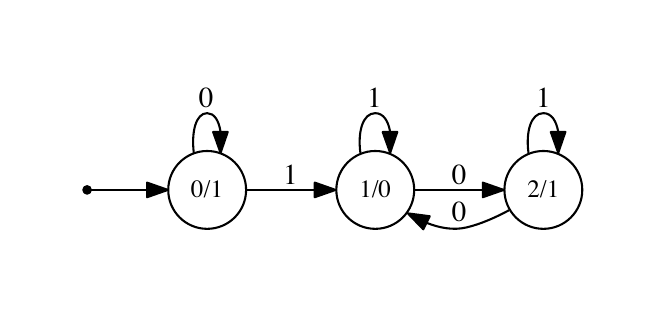}
\end{center}
\caption{DFAO computing $t'_n$}
\label{fig1}
\end{figure}
Here the labels of the states are given in the form
``state name/output of the state''.   

We start by translating the DFAO in Figure~\ref{fig1} into {\tt Walnut},
and store it as {\tt TT.txt} in {\tt Walnut's} {\tt Word Automata Library}.
\begin{verbatim}
msd_2
0 1
0 -> 0
1 -> 1

1 0
0 -> 2
1 -> 1

2 1
0 -> 1
1 -> 2
\end{verbatim}

We can then prove the equivalent result that
$R_3^{(C)} (n+1) = R_3^{(D)} (n+1)$ for $n \geq 0$.
\begin{verbatim}
eval r3cm n "n+1=x+y & x<=y & TT[x]=@0 & TT[y]=@0":
eval r3dm n "n+1=x+y & x<=y & TT[x]=@1 & TT[y]=@1":
\end{verbatim}

This gives us two linear representations, both of rank $20$.  When we minimize
these, as before, we get two identical minimized representations $(u,\gamma,v)$, as follows:
\begin{align*}
u &= \left[ \begin{smallarray}{c}
1\\
 0\\
 0\\
 0\\
 0\\
 0\\
 0\\
 0\\
 0\\
 0
\end{smallarray} \right]^T
& \gamma({\tt 0}) & = \left[ \begin{smallarray}{rrrrrrrrrrrr}
1& 0& 0& 0& 0& 0& 0& 0& 0& 0\\
  0& 0& 1& 0& 0& 0& 0& 0& 0& 0\\
  0& 0& 0& 0& 1& 0& 0& 0& 0& 0\\
  0& 0& 0& 0& 0& 0& 1& 0& 0& 0\\
  0& 0& 0& 0& 0& 0& 0& 0& 1& 0\\
  0& 0& 2&-2& 0&-1&-1& 2& 0& 1\\
  0&-2& 0& 0& 2& 1& 0& 0&-1& 1\\
  0&-1&-2& 1& 1& 1& 1&-1& 1& 0\\
  0& 0& 1&-2& 0&-1&-1& 2& 1& 1\\
  0&-3& 5&-3& 0& 0&-2& 3&-2& 3
\end{smallarray} \right]
&
\gamma({\tt 1}) &= \left[\begin{smallarray}{rrrrrrrrrr}
0& 1& 0& 0& 0& 0& 0& 0& 0& 0\\
  0& 0& 0& 1& 0& 0& 0& 0& 0& 0\\
  0& 0& 0& 0& 0& 1& 0& 0& 0& 0\\
  0& 0& 0& 0& 0& 0& 0& 1& 0& 0\\
  0& 0& 0& 0& 0& 0& 0& 0& 0& 1\\
  0&-2& 2& 0& 0& 1&-2& 1& 0& 1\\
  0&-1& 3&-2&-1& 0&-2& 3& 0& 1\\
  0& 0& 2&-2&-1&-1&-1& 2& 0& 2\\
  0& 0&-1&-1& 0& 1& 0& 0& 1& 1\\
  0&-3& 3&-1& 0& 0&-3& 4& 0& 1
\end{smallarray}\right]
&
v &= \left[ \begin{smallarray}{c}
0\\
 1\\
 0\\
 1\\
 1\\
 1\\
 1\\
 2\\
 0\\
 2
\end{smallarray} 
\right]  .
\end{align*}
and so the result is proved.
\end{proof}

\begin{remark}
The sequence $R_3^{(C)} (n)$ is sequence
\seqnum{A059451} in the OEIS.
\end{remark}

\section{Results for five and ten summands}
\label{analysis-sec}

In this section we show that the sequences $r_{10}$ and $s_{10}$, defined
above in Eqs.~\eqref{defr} and \eqref{defs}, are eventually strictly 
increasing. By contrast, as we will see later, the sequences $r_5$ and 
$s_5$ are not.  For $r_{10}$ and $s_{10}$, the ``logical approach'' of 
previous sections does not seem to suffice to prove the strictly 
increasing property, so we turn instead to techniques of analytic number
theory.

\medskip

Let ${\mathbf q} = (q_n)_{n \geq 0}$ be a sequence of $\pm 1$'s taking 
the value $+1$ infinitely often. For complex numbers $z$ and integers 
$n \geq 0$, we define the sums 
\begin{align*}
Q_n(z) &:= \sum_{0 \leq j \leq n} q_j z^j \\
Q(z) &:= \sum_{j \geq 0} q_j z^j \quad \text{(for $|z| < 1$)}. 
\end{align*}
We also define $L = L_{\bf q}$ by 
$L = L_{\bf q} := \{n \geq 1 \suchthat q_{n-1} = 1\}$ and 
$g(L,z) := \sum_{a \in L} z^a$. Let $r(k,L,n)$ denote the number of 
solutions of the equation 
$n = x_1 + \dots + x_k$ with $x_j \in L$ for all $j$. 

\begin{remark}
Note that, with the notation above, we have that $0 \notin L$.
To see that this does not restrict the generality, note that, if we want 
to represent the integers with $k$ summands, then, adding $1$ to every 
element of the underlying set just shifts the representation function 
by the additive constant $k$.
\end{remark}

\begin{theorem}
Suppose there exists a constant $C > 0$ and a real exponent 
$\alpha \in (0,1)$ such that, for all $z \in {\mathbb C}$ with $|z| = 1$ 
and for all $n \geq 1$, one has $|Q_n(z)| \leq C n^{\alpha}$. 
Then the sequence $r(k,L,n)$ is eventually strictly increasing for every 
integer $k$ such that $k > 2/(1-\alpha)$.
\label{analytic}
\end{theorem}
 
\begin{proof}
First, we note that the maximum modulus principle implies that 
$|Q_n(z)| \leq C n^{\alpha}$ for all $z$ with $|z| \leq 1$ and 
all $n \geq 1$.
We clearly have $g(L,z)^k = \sum_{n \in {\mathbb N}} r(k,L,n) z^n$. Since 
$$\Delta_{k,n} := r(k,L,n) - r(k,L,n-1)$$ is the coefficient of $z^n$ in the 
power series expansion of $(1-z) g(L,z)^k$, it suffices to prove that this 
coefficient is positive for $n > n_0(k)$. It thus suffices to prove that 
$\Delta_{k,n} > 0$ when $n$ 
is large enough. But, by Cauchy's differentiation formula, $\Delta_{k,n}$ 
is also equal to
$$
\Delta_{k,n} = 
\frac{1}{2i\pi} \oint_{\Gamma} (1-z) g(L,z)^k \frac{dz}{z^{n+1}}
$$
where $\Gamma$ is a (small) circle centered at the origin. 
Thus, taking for this circle of integration
$\Gamma = \Gamma_{k,n}:= 
\{z \suchthat z = re^{2i\pi t}, \ r = e^{-1/(n-k)}\}$, we have
\begin{equation}
\label{Delta}
\Delta_{k,n} = \int_0^1(1-z)g(L,z)^k z^{-n} dt, \ \text{with} \ z = r e^{2i\pi t} \ \text{and} \ r = e^{-1/(n-k)}.
\end{equation}
Since 
$$
g(L,z) = \sum_{a \in L} z^a = \sum_{j \geq 1} \frac{1}{2} (q_{j-1}+1) z^j = 
\frac{z}{2} \left(\frac{1}{1-z} + Q(z)\right),
$$
we obtain
\begin{equation}
\Delta_{k,n} = \int_0^1(1-z) \left(\frac{z}{2}\right)^k 
\left(\frac{1}{1-z} + Q(z)\right)^k z^{-n} dt 
= \int_0^1(1-z) \left(\frac{z}{2}\right)^k \left(\frac{1}{1-z} + 
Q_n(z)\right)^k z^{-n} dt .
\end{equation}
Note that
the terms in $Q$ corresponding to indices $> n$ give integrals equal 
to $0$. 

Hence
$$
\Delta_{k,n} = 2^{-k} \int_0^1 z^{-(n-k)}(1-z) \left(
\sum_{0 \leq \ell \leq k} {k \choose \ell} \frac{1}{(1-z)^{k-\ell}} 
Q_n^{\ell}(z)\right) dt.
$$ 
Now we split $\Delta_{k,n}$ into three quantities: the term 
corresponding to $\ell = 0$, the term $\ell = k$, and the term 
corresponding to $\ell \in [1, k-1]$.

\medskip

For $\ell = 0$ the corresponding term is 
$$
\begin{array}{lll}
2^{-k} \displaystyle\int_0^1 \frac{1}{(1-z)^{k-1}} z^{-(n-k)} dt &=&
2^{-k} \displaystyle\int_0^1 \left(\sum_{r \geq 0}  {k+r-2 \choose r} 
z^r\right) z^{-(n-k)} dt \\
&=& \displaystyle 2^{-k} {n-2 \choose n-k} = 
\displaystyle 2^{-k} {n-2 \choose k-2} \\
&\sim& \displaystyle 2^{-k} \frac{n^{k-2}}{(k-2)!}  \cdot
\end{array}
$$

For $\ell = k$, we use the upper bound $|Q_n(z)| \leq C n^{\alpha}$, 
thus obtaining the bound
$$
\left| 2^{-k} \int_0^1 (1-z) z^{-(n-k)} Q_n^k(z) dt \right| \leq 
2^{1-k} C^k e \ n^{k \alpha} .
$$

Now we look at the terms 
$$ 
I_{\ell} := 2^{-k} \int_0^1 \frac{1}{(1-z)^{k-\ell-1}} 
{k \choose \ell} z^{-(n-k)} Q_n^{\ell} dt
$$
for $\ell \in [1,k-1]$. Using the bound $| Q_n(z) | \leq C n^{\alpha}$ 
and the fact that $|z| = e^{-1/(n-k)}$, we obtain
\begin{equation}
 | I_{\ell} | \leq 2^{-k}{k \choose \ell} C^{\ell} n^{\alpha \ell} e
\int_0^1 \left| \frac{1}{(1-z)^{k-\ell-1}} \right| dt.  \label{star}
\end{equation}

Now, in order to evaluate the integral in (\ref{star}), we first note 
that (recall that $z = r e^{2i\pi t}$)
$$
\int_0^1 \left| \frac{1}{(1-z)^{k-\ell-1}} \right| dt =
2 \int_0^{1/2} \left| \frac{1}{(1-z)^{k-\ell-1}} \right| dt.
$$
Then, mimicking Dombi's method in \cite{Dombi:2002}, we split the 
interval $[0,1/2]$ into $[0,1/2] = J_1 \cup J_2$ where we define
$$
J_1 := [0, n^{-(\alpha+\varepsilon)}]\cup 
[1/2 - n^{-(\alpha+\varepsilon)},1/2] \ : \text{and} \ \
J_2 := [n^{-(\alpha+\varepsilon)}, 1/2 -n^{-(\alpha+\varepsilon)}],
$$
so that
$$
\int_0^{1/2} \left| \frac{1}{(1-z)^{k-\ell-1}} \right| dt =
\int_{J_1} \left| \frac{1}{(1-z)^{k-\ell-1}} \right| dt + 
\int_{J_2} \left| \frac{1}{(1-z)^{k-\ell-1}} \right| dt .
$$

\medskip

For $J_1$, since $|z| = r < 1$, we have when $n$ goes to infinity 
(recall that $k$ is fixed), that
$$
\left|\frac{1}{1-z}\right| \leq \frac{1}{1 - |z|} 
= \frac{1}{1 - e^{-1/(n-k)}} \sim n - k \sim n.
$$
Thus
$$
\left|\frac{1}{1-z}\right|^{k - \ell -1} = {\mathcal O}(n^{k - \ell -1}) \
\ \text{and} \ \
\int_{J_1} \left| \frac{1}{(1-z)^{k-\ell-1}} \right| dt = 
{\mathcal O}(n^{k - \ell - 1 - \alpha - \varepsilon}).
$$

\medskip

For $J_2$, we note that, for $x \in [\theta, \pi - \theta]$ (with 
$\theta \in (0, \pi/2)$), we have 
$\sin x \geq \sin \theta \geq (2/\pi) \theta$. Hence,
for $t \in J_2$ and $n$ large enough
$$
\left|\frac{1}{1-z}\right| \leq \left|\frac{1}{\Im(1-z)}\right| = 
\left|\frac{1}{r \sin (2 \pi t)}\right|
= {\mathcal O}(e^{1/(n-k)} n^{\alpha + \varepsilon}) = 
{\mathcal O}(n^{\alpha+\varepsilon}).
$$
Thus
$$
\left|\frac{1}{1-z}\right|^{k - \ell -1} = 
{\mathcal O}(n^{(\alpha + \varepsilon)(k - \ell -1)}) \ \ \text{and} \ \
\int_{J_2} \left| \frac{1}{(1-z)^{k-\ell-1}} \right| dt = 
{\mathcal O}(n^{(\alpha + \varepsilon)(k - \ell - 1)}).
$$
 
Finally, we obtain
$$
| I_{\ell} | = 
{\mathcal O}(n^{\alpha \ell + k - \ell - 1 - \alpha - \varepsilon})
+  {\mathcal O}(n^{\alpha \ell + (\alpha + \varepsilon)(k - \ell - 1)}).
$$
If $\displaystyle\alpha < \frac{k-2}{k-1}$, i.e., 
$\displaystyle k > \frac{2-\alpha}{1-\alpha}$, we can choose
$\displaystyle\varepsilon := \frac{k-2}{k-1} - \alpha > 0$. 
It is easy to check that this implies
$$
| I_{\ell} | = {\mathcal O}(n^{k-2-\varepsilon}) \ \ 
\text{for $\ell \in [1, k-1]$}:
$$
namely $\alpha(\ell-1) \leq (\ell-1)$, hence 
$\alpha \ell - \ell - \alpha \leq -1$, which gives
$\alpha \ell + k - \ell - 1 - \alpha - \varepsilon \leq k-2-\varepsilon$,
and $\alpha \ell+(\alpha+\varepsilon)(k-\ell-1) =  
((k-2)/(k-1)-\varepsilon)\ell + (k-\ell-1)(k-2)/(k-1) =
k-2 - \varepsilon \ell \leq k-2-\varepsilon$.

\medskip

Gathering the bounds for $| I_{k} |$ and $| I_{\ell} |$ for 
$\ell \in [1, k-1]$ we have
$$
\sum_{1 \leq \ell \leq k} | I_{\ell} | = {\mathcal O}(n^{k \alpha}) 
+ {\mathcal O}(n^{k-2-\varepsilon}) \ \
\text{provided that} \ \ k > \frac{2-\alpha}{1-\alpha}.
$$ 
Hence $\Delta_{k,n} \sim I_0 \sim 2^{-k} \frac{n^{k-2}}{(k-2)!}$ 
provided that $k > \frac{2-\alpha}{1-\alpha}$ and $k \alpha < k-2$. 
Since the condition $k \alpha < k-2$, i.e., the inequality 
$k(1-\alpha)  > 2$, implies that $k > \frac{2-\alpha}{1-\alpha}$, 
we are done. 
\end{proof}

\begin{corollary}
The sequences $r_{10}$ and $s_{10}$ are eventually strictly increasing.
\end{corollary}

\begin{proof}
We apply Theorem~\ref{analytic} to $r_{10}$ and $s_{10}$.  In this case
we take $q_n = (-1)^{t_n}$, and use the known fact 
\cite{Gelfond:1967,Newman&Slater:1975} that for this sequence we have
$\sup_{|z|=1} |Q_n(z)| \leq C n^{\alpha}$ for
$\alpha = (\log 3)/(\log 4) \doteq 0.79248$.
Since $10 > 2/(1-\alpha) \doteq 9.63768$,
we get that $r_{10}$ and $s_{10}$ are (eventually) strictly increasing 
functions of $n$.
\end{proof}

\bigskip

The status for $6,7,8,$ and $9$ summands is currently unknown.  Based on numerical evidence,
we make the following conjectures:
\begin{conjecture}
\leavevmode
\begin{itemize}
\item[(a)] Both $r_{6} (n)$ and $s_{6} (n)$ are
eventually strictly increasing.
\item[(a)] $r_6(n) < r_6(n+1)$ for $n \geq 37$.
\item[(b)] $s_6(n) < s_6(n+1)$ for $n \geq 5$.
\end{itemize}
\end{conjecture}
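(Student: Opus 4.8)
The plan is to reduce each monotonicity statement to the positivity of a single $\Enn$-rational sequence and then to exploit an explicit generating-function decomposition to separate a polynomial main term from a provably lower-order oscillation. First I would obtain, exactly as sketched in this section, a linear representation $(u,\gamma,v)$ for $r_6(n)$ (and similarly for $s_6(n)$) by writing the defining first-order formula counting ordered $6$-tuples of evil numbers summing to $n$ and applying Theorem~\ref{thm2}. Since the successor map $n \mapsto n+1$ is realized by a finite transducer on base-$2$ representations, one composes that transducer with $(u,\gamma,v)$ to obtain a linear representation for $n \mapsto r_6(n+1)$; as linear representations are closed under difference, subtracting yields a linear representation for $D(n) := r_6(n+1) - r_6(n)$, which I would then reduce to minimal rank by the Sch\"utzenberger algorithm. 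Parts (a) and (b) of the conjecture are then precisely the assertions $D(n) > 0$ for $n \geq 37$ in the evil case, and the analogous $D(n) > 0$ for $n \geq 5$ in the odious case.

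To analyze $D(n)$ I would pass to generating functions. Writing $P(z) = \prod_{k \geq 0}(1 - z^{2^k}) = \sum_{n \geq 0} (-1)^{t_n} z^n$, the evil and odious indicator series are $f_A(z) = \tfrac12(1-z)^{-1} + \tfrac12 P(z)$ and $f_B(z) = \tfrac12(1-z)^{-1} - \tfrac12 P(z)$, so that $r_6(n) = [z^n] f_A(z)^6$ and $s_6(n) = [z^n] f_B(z)^6$. Expanding the sixth power, the leading contribution is $2^{-6}(1-z)^{-6}$, whose coefficients form the degree-$5$ polynomial $2^{-6}\binom{n+5}{5}$ with positive leading coefficient. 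Every remaining term carries at least one factor of $P(z)$ against a lower power of $(1-z)^{-1}$; these are the oscillating corrections, and they differ between $r_6$ and $s_6$ only in sign, which is why the two cases share the same structure yet need different thresholds.

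The heart of the argument is to show that each correction's first difference is genuinely of lower order than the main term's. The first difference of the main term is the exactly positive quantity $2^{-6}\binom{n+4}{4} = \Theta(n^4)$. For the dominant correction, coming from $\tfrac{6}{64}(1-z)^{-5}P(z)$, a short computation expresses its first difference as $\tfrac{6}{64}\sum_{b \le n}\binom{n+4-b}{3}(-1)^{t_b}$ plus a bounded term; this is a polynomially-weighted Thue--Morse sum of weight degree $3$, and the dyadic folding coming from $P(z) = (1-z)P(z^2)$ (equivalently $t_{2m}=t_m,\ t_{2m+1}=1-t_m$) gives the cancellation bound $O(n^3) = o(n^4)$. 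The corrections with two or more factors of $P(z)$ satisfy the same functional equation and are of still lower order, so the main first difference dominates and $D(n) > 0$ holds for all large $n$; the finitely many remaining values below the threshold are then checked directly by evaluating the minimized linear representation.

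The main obstacle is reaching the \emph{sharp} thresholds $n \geq 37$ and $n \geq 5$ rather than merely ``all sufficiently large $n$.'' In full generality, deciding positivity of a $\mathbb{Z}$-rational sequence is the notorious Positivity Problem, not known to be decidable, so there is no push-button route; what rescues this instance is the clean split above. The real difficulty is quantitative: in the medium range the $\Theta(n^4)$ main term is not yet overwhelmingly larger than the correction, so the trivial estimate for $\sum_{b \le n}\binom{n+4-b}{3}(-1)^{t_b}$ is useless and one must make the cancellation bound explicit with small enough constants to certify that the crossover point lies at or below where the finite verification ends. The odious threshold $n \geq 5$ is so small that part~(b) is almost entirely a finite check together with a proof that positivity persists, whereas part~(a) genuinely requires pinning down these constants --- ideally through a self-similar recurrence for the partial sums of $(1-z)^{-j}P(z)^m$ furnished by the $2$-automatic structure --- and that is where I expect the work to concentrate.
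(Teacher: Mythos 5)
First, a point of order: this statement is a \emph{Conjecture} in the paper. The author explicitly writes that it ``does not seem easy to prove these claims using the simple methods in this paper'' and offers no proof, so your proposal must stand entirely on its own --- and it does not, because its central estimate is false. The fatal step is the sentence asserting that the corrections with two or more factors of $P(z)$ ``are of still lower order.'' The hierarchy runs in exactly the opposite direction: powers of $P$ have \emph{growing} coefficients, because the Thue--Morse signs are strongly self-correlated. Concretely, writing $c_n = [z^n]P(z)^2$, the functional equation $P(z)^2 = (1-z)^2P(z^2)^2$ gives $c_{2n+1} = -2c_n$, hence $|c_{2^k-1}| = 2^k$; so the two-factor correction $(1-z)^{-4}P(z)^2$ already has coefficients of order $n$, larger than the one-factor correction you analyzed (which, by the very folding you describe, is in fact $O(1)$, since $(1-z)^{-5}P(z) = (1+z)(1+z+z^2+z^3)\cdots(1+\cdots+z^{15})\,P(z^{32})$ has bounded coefficients). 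Worse, the dominant correction is the one you dismiss fastest, $2^{-6}P(z)^6$. Evaluating at $z = re^{2\pi i/3}$, where $|1 - r^{2^k}e^{\pm 2\pi i/3}|^2 = 1 + r^{2^k} + r^{2^{k+1}} \to 3$ as $r \to 1^-$, one gets $|P(re^{2\pi i/3})| \geq C_\epsilon (1-r)^{-(\log_2 3)/2 + \epsilon}$ while the main term $2^{-6}(1-z)^{-6}$ stays bounded there; a standard Abelian argument then shows that $r_6(n) - 2^{-6}\binom{n+5}{5}$ is \emph{not} $O(n^\beta)$ for any $\beta < 3\log_2 3 - 1 \approx 3.755$, and the same holds for the correction to the first difference $D(n)$ (multiply by $(1-z)$, which is bounded above and below at $re^{2\pi i/3}$).

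So the true situation is a main term $\sim n^4/1536$ fighting oscillating corrections that infinitely often reach size about $n^{3.755}$ with constants of order $2^{-6}$: there is no clean ``$\Theta(n^4)$ versus $O(n^3)$'' separation, and the entire conjecture hinges on an upper bound of the form $o(n^4)$ (with usable constants) for the coefficients of $P(z)^6$ --- equivalently, a sharp enough estimate on sup norms or a joint spectral radius attached to the partial products $\prod_{k<N}(1-z^{2^k})$. Nothing in your sketch supplies this, and crude chains do not reach it: for instance, bounding the maximal coefficient of $P_N^6$ by $\frac{1}{2\pi}\int|P_N|^6 \leq \|P_N\|_\infty^4 \cdot 2^N$ and using $\|P_N\|_\infty \geq 3^{N/2}$ already lands at exponent $\log_2 18 \approx 4.17 > 4$. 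On top of that, even granting $o(n^4)$, certifying the sharp cutoffs $n \geq 37$ and $n \geq 5$ requires every constant to be explicit before the finite check can close the argument; your remark that part (b) is ``almost entirely a finite check'' misreads where the difficulty lies, since the infinite tail is exactly as hard in both parts. Your reductions in the first paragraph (linear representation for $D(n)$ via a successor transducer, minimization) are fine and match the paper's toolkit, but the analytic core of the proof is missing --- which is precisely why the paper leaves this as a conjecture.
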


Now we turn our attention to $r_5$ and $s_5$.  In contrast to the 
situation for $r_{10}$ and $s_{10}$, we can use our ``logical approach'' to 
show that these sequences are {\it not\/} strictly increasing.

For any fixed $j$, one can easily obtain linear representations for 
$r_j$ and $s_j$ using the methods explained above.

\begin{theorem}
We have $r_5(2^n) > r_5(2^n + 1)$ and
$s_5(2^n) > s_5 (2^n + 1)$ for all sufficiently large $n$.
\end{theorem}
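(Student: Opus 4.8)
The plan is to mimic the closed-form computation used above for $R_2^{(A)}(2^t-1)$ and $R_2^{(A)}(2^t+1)$, now applied to the $5$-fold convolutions. First I would invoke Theorem~\ref{thm2} (via {\tt Walnut}, using a formula such as \verb|n=x1+x2+x3+x4+x5 & T[x1]=@0 & ... & T[x5]=@0|) to produce a linear representation $(u,\gamma,v)$ for $r_5$, and a separate one for $s_5$. The key observation is that the base-$2$ representation of $2^n$ is $1\,\overbrace{0\cdots0}^{n}$, while that of $2^n+1$ is $1\,\overbrace{0\cdots0}^{n-1}1$. Hence
\[
r_5(2^n)=u\,\gamma(1)\gamma(0)^{n}\,v, \qquad r_5(2^n+1)=u\,\gamma(1)\gamma(0)^{n-1}\gamma(1)\,v,
\]
so that the quantity of interest is $d(n):=r_5(2^n)-r_5(2^n+1)=u\,\gamma(1)\gamma(0)^{n-1}\bigl(\gamma(0)-\gamma(1)\bigr)v$. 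The same identities hold for $s_5$ with its own matrices.

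Next I would extract the closed form of $d(n)$. By the same reasoning as in the earlier theorem, the entries of $\gamma(0)^{n}$ satisfy the linear recurrence encoded by the minimal polynomial of $\gamma(0)$, so $d(n)$ is a linear combination $\sum_i p_i(n)\,\lambda_i^{\,n}$ over the eigenvalues $\lambda_i$ of $\gamma(0)$, with $p_i$ polynomial at any repeated eigenvalue. I would compute these eigenvalues exactly and solve for the coefficients from finitely many machine-computed values of $d(n)$, exactly as in the $R_2^{(A)}(2^t-1)$ calculation. Since $r_5(m)$ grows on average like $c\,m^4$, at the argument $2^n$ it grows like $c\,(2^n)^4=c\,16^n$; thus the dominant eigenvalue of $\gamma(0)$ is $16$.

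Here I would verify the crucial cancellation. The leading $16^n$ contributions to $r_5(2^n)$ and to $r_5(2^n+1)$ turn out to carry the \emph{same} coefficient, so they cancel in $d(n)$; equivalently, the vector $\bigl(\gamma(0)-\gamma(1)\bigr)v$ has no component along the dominant eigendirection of $\gamma(0)$. Consequently $d(n)$ is governed not by $16$ but by the subdominant spectrum. I would then read off the largest-modulus eigenvalue with nonzero coefficient in the closed form, confirm that the eigenvalue is a positive real and its coefficient is positive, and conclude $d(n)>0$ for all large $n$. The argument for $s_5$ would be identical.

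The hard part is precisely this subdominant analysis. Because the Perron eigenvalue drops out, one cannot simply appeal to positivity of a leading term: the sign of $d(n)$ is decided by the subdominant spectral data, whose coefficient must be certified by an exact rational or algebraic computation rather than by floating point. If the governing subdominant eigenvalue turns out to be negative, or to occur in a complex-conjugate pair (as Thue-Morse correlation terms often do), then establishing \emph{eventual} positivity further requires exhibiting a real positive term of strictly larger modulus and bounding the oscillatory contributions below it for all sufficiently large $n$. Carrying out this certification rigorously from the machine-produced representations, uniformly for both $r_5$ and $s_5$, is where the genuine work lies.
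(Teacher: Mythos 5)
Your setup matches the paper's: compute a linear representation $(u,\gamma,v)$ for $r_5$ with {\tt Walnut}, observe that $2^n$ and $2^n+1$ have base-$2$ representations $1\overbrace{0\cdots 0}^{n}$ and $1\overbrace{0\cdots 0}^{n-1}1$, and expand the difference $d(n)=r_5(2^n)-r_5(2^n+1)$ in powers of the roots of the minimal polynomial of $\gamma(0)$. But your ``crucial cancellation'' is exactly where the proposal goes wrong: the $16^n$ terms do \emph{not} cancel in $d(n)$. When the coefficients are solved for exactly, the coefficient of $16^n$ in the difference is $1/14039101440$, which is nonzero and positive; since $16$ strictly dominates all other roots of the minimal polynomial $X^4(X-1)(X-2)(X-4)(X-8)(X-16)(X+2)(X+4)(X+8)(X^2-8)(X^2-2X-16)$, the conclusion $d(n)>0$ for all large $n$ follows immediately, with no subdominant analysis at all. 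Your heuristic --- that $r_5(m)\sim c\,m^4$ forces equal leading coefficients at $m=2^n$ and at $m=2^n+1$ --- is invalid: representation functions built from automatic sequences exhibit persistent multiplicative fluctuations, so $r_5(m)/m^4$ need not converge, and nearby arguments can differ at leading order. The paper's own formula $R_2^{(A)}(2^t-1)=2^{t-3}+(-2)^{t-3}$, which oscillates between $0$ and $2^{t-2}$ and never settles to a single constant times $2^t$, is precisely this phenomenon one level down.

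Because of this false premise, the proposal also fails on its own terms: the final stage that you correctly identify as ``where the genuine work lies'' --- certifying a subdominant eigenvalue with positive coefficient, handling a possibly negative or complex-conjugate leading pair, bounding the oscillatory terms --- is never actually carried out; it is only sketched as a contingency. So as written you have neither the paper's (easy, correct) endgame nor a completed alternative. The repair is simple: perform the exact rational computation you describe and read off the sign of the $16^n$ coefficient of $d(n)$ itself, rather than assuming that coefficient vanishes; the same computation, word for word, then handles $s_5$.
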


\begin{proof}
We can use {\tt Walnut} to compute a linear representation for
$r_5(n)$, as follows:
\begin{verbatim}
eval r5 n "n=i+j+k+l+m & T[i]=@0 & T[j]=@0 & T[k]=@0 & T[l]=@0 & T[m]=@0":
\end{verbatim}

This gives us vectors $v, w$ and a matrix-valued
morphism $\gamma$ such that $v \gamma(x) w = r_5(n)$ for all
binary strings $x$ such that $[x]_2 = n$.   The rank of this linear 
representation is $160$, and is not given here for space reasons.

Next, we compute the minimal polynomial of $\gamma(0)$ using
{\tt Maple}.  It is
$$X^4(X-1)(X-2)(X-4)(X-8)(X-16)(X+2)(X+4)(X+8)(X^2-8)(X^2-2X-16).$$
It follows that both $r_5(2^n)$ and $r_5(2^n+1)$ can be written as
a linear combination of the $n$'th powers of the zeros of this polynomial,
and therefore, so is the difference $r_5(2^n)-r_5(2^n + 1)$.
When we solve for the coefficients of this linear combination, we find that 
the coefficient corresponding to $16^n$ is positive (in fact it is
$1/14039101440$).  Since $16$ is the dominant root, this shows
the existence of some $n_0$ such that
the difference $r_5(2^n)-r_5(2^n+1)$ is positive for all $n \geq n_0$.

Exactly the same proof, word-for-word, works for $s_5$.
\end{proof}

\begin{remark}
Theorem~\ref{analytic} can be applied to several other sequences for 
which the condition $|Q_n(z)| \leq C n^{\alpha}$ for some $\alpha$ in 
$(0,1)$ holds. We give but one family of examples---namely, the 
Golay-Shapiro-Rudin sequences, for which it is known that $\alpha = 1/2$, and
hence $k > 4$.  For the usual Golay-Shapiro-Rudin sequence, this is exactly 
the first part of Dombi's Theorem~1 \cite[p.~138]{Dombi:2002}; more 
generally this also gives $k > 4$ for the generalized Rudin-Shapiro 
sequences of Theorem~3.1 in \cite[p.~20]{Allouche&Liardet:1991}, with 
$\varphi$ and $v$ being the constant sequence $1$.
\end{remark}

\section{Other results}
\label{five}

For the following result and proof, we adopt the
Iverson notation where, for a proposition $P$,
we set $[P] = 1$ if $P$ is true
and $[P] = 0$ otherwise.

\begin{theorem}
For $n \geq 0$ we have
$r_2 (n) - s_2 (n) = [$n$ {\rm\ even}] (-1)^{t_n}$.
\end{theorem}

\begin{proof}
We can find linear representations
for $r_2$ and $s_2$ with
the {\tt Walnut} commands 
\begin{verbatim}
eval r2m n "n=x+y & T[x]=@0 & T[y]=@0":
eval s2m n "n=x+y & T[x]=@1 & T[y]=@1":
\end{verbatim}
They are $(u_2, \gamma_2, v_2)$ for $r_2$ and
$(u_2, \gamma_2, v'_2)$ for $s_2$, where
$$
u_2 = \left[ \begin{smallarray}{c}
1\\
 0\\
 0\\
 0\\
 0\\
 0\\
 0\\
 0
\end{smallarray} \right]^T  \quad
\gamma_2(0) =
\left[ \begin{smallarray}{rrrrrrrr}
1&0&0&0&0&0&0&0\\
 0&0&0&0&1&1&1&0\\
 0&0&1&0&0&0&0&0\\
 0&0&0&1&0&0&0&0\\
 0&1&0&1&0&0&0&1\\
 0&1&1&0&0&0&0&1\\
 0&0&0&0&0&0&1&0\\
 1&0&0&0&1&1&0&0\\
\end{smallarray} \right] \quad
\gamma_2(1) =
\left[ \begin{smallarray}{rrrrrrrr}
0&1&1&1&0&0&0&0\\
 0&0&0&0&0&0&0&1\\
 1&0&0&0&1&0&1&0\\
 1&0&0&0&0&1&1&0\\
 0&0&0&0&0&1&0&0\\
 0&0&0&0&1&0&0&0\\
 0&0&1&1&0&0&0&1\\
 0&1&0&0&0&0&0&0
\end{smallarray} \right] \quad
v_2 = \left[ \begin{smallarray}{c}
1\\
 0\\
 0\\
 0\\
 0\\
 0\\
 0\\
 0
\end{smallarray}\right] \quad
v'_2 = \left[ \begin{smallarray}{c}
 0\\
 0\\
 0\\
 0\\
 0\\
 0\\
 1\\
 0
\end{smallarray}\right].
$$
From this we can easily form a linear representation 
for $r_2(n) - s_2 (n)$ as follows:
$(u_2, \gamma_2, v_2-v'_2)$.  When we minimize it, 
we get a linear representation $(x_2, \gamma'_2, y_2)$ of rank 2, as follows:
$$
x_2 = \left[ \begin{smallarray}{c}
1\\
 0\\
\end{smallarray} \right]^T  \quad
\gamma'_2(0) =
\left[ \begin{smallarray}{rr}
1 & 0 \\
-1 & 0
\end{smallarray} \right] \quad
\gamma'_2(1) =
\left[ \begin{smallarray}{rr}
0 & 1 \\
0 & -1 \\
\end{smallarray} \right] \quad
y_2 = \left[ \begin{smallarray}{c}
1\\
 0\\
\end{smallarray}\right].
$$
Now an easy induction gives that 
$$\gamma_2'(x) = 
\left[ \begin{array}{rr}
[n {\rm\ even}](-1)^{t_n} & -[n {\rm\ odd}] (-1)^{t_n} \\
-[n {\rm\ even}] (-1)^{t_n} & [n {\rm\ odd}] (-1)^{t_n}
\end{array}
\right] $$
for $n \geq 1$ and all strings $x$ such that $[x]_2 = n$.
This completes the proof.
\end{proof}

\begin{theorem}
There are infinitely many $n$ for which $r_3 (n) = s_3 (n)$.
Some examples include $n = 4^i - 2$ for $i \geq 1$ and
$n = 3\cdot 4^i - 1$ for $i \geq 0$.
\end{theorem}

\begin{proof}
We can find linear representations for $r_3(n)$ and $s_3(n)$
using the following {\tt Walnut} commands:
\begin{verbatim}
eval r3m n "n=x+y+z & T[x]=@0 & T[y]=@0 & T[z]=@0":
eval s3m n "n=x+y+z & T[x]=@1 & T[y]=@1 & T[z]=@1":
\end{verbatim}
It turns out these linear representations are of rank $24$ and of the form
$(u_3, \gamma_3, v_3)$ and $(u_3, \gamma_3, v'_3)$,
respectively.    So we can form the linear
representation for $r_3(n)-s_3(n)$ by
$(u_3, \gamma_3, v_3-v'_3)$.  When we minimize it,
we get a linear representation $(x_3, \gamma'_3, y_3)$ of
rank $6$, as follows:
$$
x_3 = \left[ \begin{smallarray}{c}
1\\
 0\\
 0\\
 0\\
 0\\
 0
\end{smallarray} \right]^T  \quad
\gamma'_3(0) =
\left[ \begin{smallarray}{rrrrrr}
1& 0& 0& 0& 0& 0\\
  0& 0& 1& 0& 0& 0\\
  0& 0& 0& 0& 1& 0\\
 -1&-1&-1& 0&-1&-1\\
 -3& 1& 0& 0&-2& 1\\
  2& 5&-3& 2& 4& 3
\end{smallarray} \right] \quad
\gamma'_3(1) =
\left[ \begin{smallarray}{rrrrrr}
0& 1& 0& 0& 0& 0\\
  0& 0& 0& 1& 0& 0\\
  0& 0& 0& 0& 0& 1\\
  3&-3& 2&-3& 1&-1\\
 -3&-1&-2& 2&-1&-2\\
 -1& 2& 0&-1&-1& 2
\end{smallarray} \right] \quad
y_3 = \left[ \begin{smallarray}{c}
1\\
  0\\
  0\\
  2\\
 -3\\
  0
\end{smallarray}\right] .
$$
Now the binary representation of $4^i - 2$ is of the form
$1^{2i-1} 0$, so we know that $r(4^i -2)-s(4^i - 2)$
can be expressed as a linear combination of the $(2i-1)$th powers of
the roots of the
minimal polynomial of $\gamma'_3(1)$.
This minimal polynomial is
$X^2 (X+1)(X^2 - 8)$.    Solving for this linear combination,
we find that the coefficients are all zero, so 
$r(4^i -2)-s(4^i - 2) = 0$ for all $i \geq 1$.  Actually, with the
same technique, one can prove that
$r(4^i-2)=s(4^i-2) = 16^{i-1} - 4^{i-1}$.

For $3 \cdot 4^i - 1$, the same ideas work.
\end{proof}

\begin{theorem}
There are infinitely many $n$ for which $r_4 (n) = s_4 (n)$.
Some examples include $n = 6 \cdot 4^i - 1$ for $i \geq 0$ and
$n = 2\cdot 4^i - 3$ for $i \geq 1$.
\end{theorem}

\begin{proof}
We can find linear representations for $r_4(n)$ and $s_4(n)$
using the following {\tt Walnut} commands:
\begin{verbatim}
eval r4m n "n=x+y+z+w & T[x]=@0 & T[y]=@0 & T[z]=@0 & T[w]=@0":
eval s4m n "n=x+y+z+w & T[x]=@1 & T[y]=@1 & T[z]=@1 & T[w]=@1":
\end{verbatim}
It turns out these linear representations are of rank $64$ and of the form
$(u_4, \gamma_4, v_4)$ and $(u_4, \gamma_4, v'_4)$,
respectively.    So we can form the linear
representation for $r_4(n)-s_4(n)$ by
$(u_4, \gamma_4, v_4-v'_4)$.  When we minimize it,
we get a linear representation $(x_4, \gamma'_4, y_4)$ of rank $7$,
as follows:
$$
x_4 = \left[ \begin{smallarray}{c} 
 1\\
 0\\
 0\\
 0\\
 0\\
 0\\
 0
\end{smallarray} \right]^T  \ 
\gamma'_4(0) =
\left[ \begin{smallarray}{rrrrrrr}
    1&   0&   0&   0&   0&   0&   0\\
    0&   0&   1&   0&   0&   0&   0\\
    0&   0&   0&   0&   1&   0&   0\\
    0&   0&   0&   0&   0&   0&   1\\
   -3&   1&   0&   0&  -2&   1&   0\\[1pt]
   -{1 \over 2}&   {5\over 2}&  -{{11}\over 2}&   2&   {3 \over 2}&   {1 \over 2}&  -{5 \over 2}\\[3pt]
    {5\over 2}&  -{7 \over 2}&   {9 \over 2}&  -2&  -{1\over 2}&  -{3 \over 2}&  {5\over 2} 
\end{smallarray} \right] \ 
\gamma'_4(1) =
\left[ \begin{smallarray}{rrrrrrr}
    0&   1&   0&   0&   0&   0&   0\\
    0&   0&   0&   1&   0&   0&   0\\
    0&   0&   0&   0&   0&   1&   0\\[1pt]
    {7 \over 2}&  -{5\over 2}&   {5\over 2}&  -3&   {3 \over 2}&  -{1\over 2}&   {1 \over 2}\\[3pt]
   -{5 \over 2}&  -{1\over 2}&  -{3\over 2}&   2&  -{1 \over 2}&  -{3 \over 2}&   {1\over 2}\\[3pt]
   -1&   2&   0&  -1&  -1&   2&   0\\[1pt]
    {7\over 2}&  -{5\over 2}&   {3\over 2}&  -2&   {3\over 2}&  -{3\over 2}&  -{1\over 2}
\end{smallarray} \right] \ 
y_4 = \left[ \begin{smallarray}{c}
1\\
  0\\
  0\\
  4\\
 -1\\
  0\\
  4
\end{smallarray}\right] .
$$
Now the binary representation of $6 \cdot 4^i - 1$ is of the form
$10 1^{2i+1}$, so we know that $r_4(6 \cdot 4^i -1 )-s_4(6\cdot 4^i - 1)$
can be expressed as a linear combination of the $(2i+1)$th powers of
the roots of the
minimal polynomial of $\gamma'_4(1)$.
This minimal polynomial is
$X^3 (X+1)(X^2 - 8)$.    Solving for this linear combination,
we find that the coefficients are all zero, so 
$r_4(6 \cdot 4^i -1 )-s_4(6\cdot 4^i - 1) = 0$
for all $i \geq 0$.  In fact, with a little more work, and the same technique,
one can show that 
$$r_4(6 \cdot 4^i -1 )=s_4(6\cdot 4^i - 1)=
{9 \over 4} 64^i + 16^i + {{4^i} \over 8} + c_1 \alpha_1^i + c_2 \alpha_2^i,$$
where $\alpha_1 = 18 - 2\sqrt{17}$, $\alpha_2 = 18+2 \sqrt{17}$,
$c_1 = (7\alpha_1-2\alpha_2)/288$, $c_2 = (7\alpha_2 - 2\alpha_1)/288$.

For $2\cdot 4^i - 3$, the same technique works.
\end{proof}

\section*{Acknowledgments}

We thank Emmanuel Lesigne for raising the problem and Michel Dekking for
discussions.

\end{document}